\newtheorem{predl}{Proposition}
\newtheorem{lem}{Lemma}
\newtheorem{theorem}{Theorem}
\newtheorem*{sld}{Corollary}
\newcommand*{\hm}[1]{#1\nobreak\discretionary{}{\hbox{$\mathsurround=0pt #1$}}{}}
\DeclareMathOperator{\cone}{cone } \DeclareMathOperator{\Pic}{Pic}
\DeclareMathOperator{\spec}{Spec } \sloppy \textwidth=16.3cm
\begin{document}
\date{}
\title{GIT-equivalence and diagonal actions}
\author[P. Yu. Kotenkova]{Polina Yu. Kotenkova}
\address{Department of Higher algebra, Faculty of Mechanics and Mathematics, Moscow State Lomonosov University, Leninskie Gory~1, Moscow, 119991, Russia}
\email{kotpy@mail.ru} \maketitle

\begin{abstract}
We describe the GIT-equivalence classes of linearized ample line
bundles for the diagonal actions of the linear algebraic groups
$\text{SL}(V)$ and $\text{SO}(V)$ on ${\mathbb{P}(V)^{m_1}\times
\mathbb{P}(V^*)^{m_2}}$ and $\mathbb{P}(V)^m$ respectively.
\end{abstract}

\section{Introduction}

Let $G$ be a complex reductive algebraic group, $X$ a projective
$G$-variety, and $L$ an ample $G$-linearized line bundle over the
variety $X$. Classical Mumford's construction, see \cite{4}, takes
these objects to the open subset of semi-stable points
$$X_L^{ss}=\{x \in X : \, F(x) \ne 0 \,\, \mbox{for some}
\,\, m>0 \,\, \mbox{and} \,\, F \in \Gamma(X, L^{\otimes m})^G\}$$
and the categorical quotient  $X_L^{ss} \longrightarrow
X_L^{ss}/\!/G$. Two $G$-linearized line bundles $L_1$ and $L_2$
over the $G$-variety $X$ are called {\itshape GIT-equivalent} if
$X_{L_1}^{ss}=X_{L_2}^{ss}$. The papers \cite{3}, \cite{5} and
\cite{6} are devoted to the study of GIT-equivalence. It is shown
that the GIT-equivalence classes define the fan structure on the
cone of $G$-linearized ample line bundles. The main approach used
to describe the cones in this fan is the Hilbert-Mumford criterion
\cite[Chapter 2]{4}; also see \cite[Example 3.3.24]{3}, where the
GIT-equivalence classes are described for the diagonal action of
the group $SL(V)$ on the variety многообразии $\mathbb{P}(V)^m$.

In \cite{2} an elementary description of GIT-equivalence classes
for algebraic torus actions is obtained. The authors use so called
orbit cones. Using the Cox construction, in \cite{1} this
description is adopted for a large class of $G$-varieties, compare
\cite[Section 3]{6}. In \cite[Theorem 6.2]{1} there is also a
description of the GIT-equivalence classes for the diagonal action
of  the symplectic group $Sp(V)$ on $\mathbb{P}(V)^m$.

The aim of this paper is to find the GIT-equivalence classes for
the diagonal actions of other classical groups. In Section $2$ we
give some necessary information from works \cite{1} and \cite{2}.
In Section~$3$ we describe the GIT-fan  for the diagonal action of
the special orthogonal group $SO(V)$, and in Section~$4$ for the
diagonal action of the group $SL(V)$ on the variety
${\mathbb{P}(V)^{m_1}\times \mathbb{P}(V^*)^{m_2}}$. These results
are based on the description of generators of the algebra  of
invariants (The First Fundamental Theorem of the Classical
Invariant Theory).

The author is grateful to I.V. Arzhantsev for posing the problem
and his permanent support.

\section{Orbit cones and the GIT-fan}

Let $G\subseteq GL(V)$ be a complex algebraic group acting
diagonally on the space ${\mathbb{V}=V^{m_1}\hm\oplus
(V^*)^{m_2}}$, $m_1\hm+m_2=m$. Denote by $P(a_1,\ldots,a_m)\subset
\mathbb{C}[\mathbb{V}]$, ${a_i\in \mathbb Z_{\geqslant 0}}$, the
subspace consisting of homogeneous polynomials of multidegree
$(a_1,\ldots,a_m)$. To each vector ${a=(a_1,\ldots,a_m) \in
\mathbb Z_{\geqslant 0}^m}$ assign the open subset
$$U(a)\hm=\{v\in \mathbb{V} \mid \exists k \in \mathbb N, \,\,F
\in P(ka_1,\ldots,ka_m)^G : F(v)\neq 0\},$$ where
$P(ka_1,\ldots,ka_m)^G \subseteq P(ka_1,\ldots,ka_m)$ is the
subspace of $G$-invariants. Note that the subset $U(a)$
corresponds to the set of semistabe points $X_L^{ss}$, where
${X\hm=\mathbb{P}(V)^{m_1}\times \mathbb{P}(V^*)^{m_2}}$. Here the
line bundle $L$ is represented by the point
$a\in\mathbb{Z}^{m}\cong\Pic(X)$.

Two points $a$ and $b \in \mathbb Z_{\geqslant0}^m$ are called
{\itshape GIT-equivalent} if ${U(a)=U(b)}$.

Assume that the algebra of invariants $\mathbb{C}[\mathbb{V}]^G$
is finitely generated and $F_1, \ldots, F_r$ are its homogeneous
generators. Denote by $a(1), \ldots, a(r) \in
\mathbb{Z}^m_{\geqslant 0}$ the multidegrees of $F_1, \ldots,
F_r$.

{\itshape The weight cone} $\Omega \subset \mathbb{Q}^m$ is the
cone generated by $a(1), \ldots, a(r)$.

\begin{lem}
The set $U(a)$ is non-empty if and only if $a \in \Omega$.
\end{lem}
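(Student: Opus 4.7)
The plan is to unpack the definitions of $U(a)$ and $\Omega$ and observe that each condition says exactly the same thing about the monoid of multidegrees of homogeneous invariants. The key input is that $F_1,\ldots,F_r$ generate $\mathbb{C}[\mathbb{V}]^G$ as an algebra, so every homogeneous invariant decomposes as a linear combination of monomials in the $F_i$, whose multidegrees lie in the monoid generated by $a(1),\ldots,a(r)$.

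For the ``only if'' direction, I would start from a non-zero $F\in P(ka_1,\ldots,ka_m)^G$ for some $k\in\mathbb{N}$. Expanding $F$ as a polynomial in $F_1,\ldots,F_r$ and picking out the multihomogeneous part, $F$ becomes a $\mathbb{C}$-linear combination of monomials $\prod F_i^{n_i}$ each of multidegree $\sum n_i a(i) = ka$. Since $F\neq 0$, at least one such tuple $(n_1,\ldots,n_r)$ occurs, giving $ka=\sum n_i a(i)\in\Omega$, and hence $a\in\Omega$ because $\Omega$ is a rational cone closed under scaling by positive rationals.

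For the ``if'' direction, assume $a\in\Omega$. Then there exist $q_i\in\mathbb{Q}_{\geqslant 0}$ with $a=\sum q_i a(i)$, and clearing denominators produces a positive integer $k$ and $n_i\in\mathbb{Z}_{\geqslant 0}$ such that $ka=\sum n_i a(i)$. The product $F=\prod F_i^{n_i}$ is then a non-zero $G$-invariant of multidegree $ka$, and since $F$ is a non-zero polynomial on $\mathbb{V}$ (over an infinite field), there exists $v\in\mathbb{V}$ with $F(v)\neq 0$. Thus $v\in U(a)$, proving $U(a)\neq\emptyset$.

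There is no real obstacle here; the only point requiring a moment of care is the multihomogeneous decomposition in the ``only if'' direction, to ensure that we can actually extract a single monomial $\prod F_i^{n_i}$ of multidegree exactly $ka$. This is immediate once one observes that $\mathbb{C}[\mathbb{V}]$ is graded by $\mathbb{Z}^m$ via the multidegree, that $\mathbb{C}[\mathbb{V}]^G$ is a multigraded subalgebra (because $G$ preserves each $P(a_1,\ldots,a_m)$), and that each generator $F_i$ is already multihomogeneous by assumption.
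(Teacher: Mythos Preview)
Your proof is correct and follows essentially the same approach as the paper: expand an invariant of multidegree $ka$ in terms of the generators $F_i$ to place $ka$ in the monoid generated by the $a(i)$ for one direction, and clear denominators to produce a nonzero monomial invariant $\prod F_i^{n_i}$ of multidegree $ka$ for the other. Your additional remarks on the multigraded structure and on the existence of $v$ with $F(v)\neq 0$ simply make explicit what the paper leaves implicit.
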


\begin{proof}
Suppose $U(a) \neq \varnothing$. Then there exist $k\in \mathbb N$
and $F \in P(ka_1,\ldots,ka_m)^G$ such that $F\not\equiv 0$. Since
$F \in \mathbb{C}[F_1,\ldots,F_r]$, we have
$$F\hm=\sum\limits_{p_1,\ldots,p_r} c_{p_1\ldots p_r}F_1^{p_1}
\ldots F_r^{p_r}.$$ If $c_{p_1\ldots p_r}\ne 0$, then we obtain
$ka\hm=p_1a(1)\hm+\ldots\hm+p_ra(r)$. Hence $a\in \Omega$.

Conversely, assume that $a\in \Omega$. Then
$$a\hm=\lambda_1a(1)\hm+\ldots\hm+\lambda_ra(r),$$ where
$\lambda_1,\ldots,\lambda_r  \in \mathbb Q_{\geqslant 0}$.
Multiplying this equation by the common denominator of
$\lambda_1,\ldots,\lambda_r$, we get
$$ka\hm=c_1a(1)\hm+\ldots\hm+c_ra(r),$$ where $k,c_1,\ldots,c_r \in
\mathbb Z_{\geqslant0}$. Then $F_1^{c_1}\ldots F_r^{c_r} \in
P(ka)^G$, $F_1^{c_1} \ldots F_r^{c_r} \not\equiv 0$. Hence $U(a)
\neq \varnothing$.
\end{proof}

Let $v \in \mathbb{V} $. {\itshape The orbit cone} associated to
$v$ is the rational cone $$\omega(v)=\cone(a \mid \exists F \in
P(a)^G : F(v) \neq 0).$$

\begin{lem} \label{l1}
One has $\omega(v)=\cone(a(i) \mid F_i(v) \neq 0)$.
\end{lem}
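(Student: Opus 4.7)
The plan is to prove the two inclusions separately, with the nontrivial one being a homogeneity-and-generators argument modelled on the proof of the previous lemma.

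For the easy inclusion $\cone(a(i) \mid F_i(v)\neq 0) \subseteq \omega(v)$, I would simply observe that whenever $F_i(v)\neq 0$, the generator $F_i$ itself is an element of $P(a(i))^G$ not vanishing at $v$, so $a(i)$ belongs to the generating set of $\omega(v)$ by definition.

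For the reverse inclusion, I would take an arbitrary ray generator of $\omega(v)$, namely some $a\in\mathbb{Z}_{\geqslant 0}^m$ for which there exists $F\in P(a)^G$ with $F(v)\neq 0$, and show that $a$ lies in $\cone(a(i)\mid F_i(v)\neq 0)$. The key step is to expand $F$ as a polynomial in the generators,
\[
F=\sum_{p_1,\ldots,p_r} c_{p_1\ldots p_r}\,F_1^{p_1}\cdots F_r^{p_r},
\]
and then exploit the fact that $F$ is homogeneous of multidegree $a$: only monomials of multidegree exactly $a$ can appear with nonzero coefficient, so every contributing tuple satisfies $a=p_1a(1)+\ldots+p_ra(r)$. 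Since $F(v)\neq 0$, at least one such monomial $F_1^{p_1}\cdots F_r^{p_r}$ is itself nonzero at $v$, which forces $F_i(v)\neq 0$ for every index $i$ with $p_i>0$. This exhibits $a$ as a nonnegative integer combination of those $a(i)$ with $F_i(v)\neq 0$, finishing the inclusion.

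The main (mild) obstacle is purely bookkeeping: one must be careful to use multidegree-homogeneity of $F$ to discard mixed monomials, rather than the single $\mathbb{Z}_{\geqslant 0}$-grading, so that the combination $a=\sum p_i a(i)$ is an honest equality and not merely an equality after rescaling by some $k$. Once this is observed the proof is immediate, and no further input beyond the finite generation hypothesis preceding the statement is required.
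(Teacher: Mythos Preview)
Your proof is correct and matches the paper's argument essentially line for line: the paper also handles the easy inclusion in one sentence, then expands $F$ in the generators, picks a monomial not vanishing at $v$, and reads off $a$ as a nonnegative combination of the corresponding $a(i)$. Your explicit remark about multidegree-homogeneity is a point the paper leaves implicit, but otherwise the two proofs are identical.
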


\begin{proof}
It is evident that $\cone(a(i) \mid F_i(v)\ne 0)$ is contained in
$\omega(v)$.

Consider the point $a \in \mathbb Z^m_{\geqslant0}$ and the
polynomial
$$F\hm=\sum\limits_{p_1,\ldots,p_r} c_{p_1\ldots
p_r}F_1^{p_1}\hm \ldots F_r^{p_r} \hm\in P(a)^G,$$ such that $F(v)
\ne 0$. Then there is a summand $c_{p_1\ldots p_r}F_1^{p_1} \ldots
F_r^{p_r}$ not vanishing at $v$. Further, if $p_i \ne 0$, then
$F_i(v) \ne 0$. Therefore $a\hm=p_{i_1}a(i_1)\hm+\ldots\hm
+p_{i_s}a(i_s)$, where $F_{i_l}(v) \ne 0, \, l=1,\ldots, s$. Hence
we obtain the inverse inclusion $\cone(a(i) \mid F_i(v)\ne
0)\supseteq \omega(v)$.
\end{proof}
\begin{sld}
 The set of cones $\{\omega(v) \mid v \in \mathbb{V}\}$ is  finite.
\end{sld}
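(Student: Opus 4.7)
The plan is to deduce the finiteness directly from Lemma~\ref{l1}. That lemma identifies each orbit cone $\omega(v)$ with $\cone(a(i) \mid F_i(v)\ne 0)$, so $\omega(v)$ is generated by some subset of the fixed finite family $\{a(1),\ldots,a(r)\}$ of multidegrees of the homogeneous generators of $\mathbb{C}[\mathbb{V}]^G$.

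Consequently, I would define the map $v \mapsto I(v) := \{i \in \{1,\ldots,r\} \mid F_i(v) \ne 0\}$, taking $\mathbb{V}$ into the power set $2^{\{1,\ldots,r\}}$. By Lemma~\ref{l1}, the orbit cone $\omega(v)$ depends only on $I(v)$, namely $\omega(v) = \cone(a(i) \mid i \in I(v))$. Since the codomain of $v \mapsto I(v)$ has $2^r$ elements, the set $\{\omega(v) \mid v \in \mathbb{V}\}$ has at most $2^r$ elements, and in particular is finite.

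There is no real obstacle here: the content of the corollary is already packaged in Lemma~\ref{l1}, and the only ingredient beyond it is the finite generation of $\mathbb{C}[\mathbb{V}]^G$, which was assumed at the outset of the section. The brief proof should therefore consist of citing Lemma~\ref{l1} and invoking the finiteness of the power set of $\{1,\ldots,r\}$.
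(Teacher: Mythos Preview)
Your argument is correct and is exactly the intended one: the paper states this as an immediate corollary of Lemma~\ref{l1} without further proof, precisely because $\omega(v)$ is generated by a subset of the fixed finite list $a(1),\ldots,a(r)$, giving at most $2^r$ distinct cones.
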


\begin{predl} \label{pr1}
Two points $a$ and $b$ are GIT-equivalent if and only if for any
$v \in \mathbb{V}$ either $a \in \omega(v)$ and $b \in \omega(v)$,
or $a \notin \omega(v)$ and $b \notin \omega(v)$.
\end{predl}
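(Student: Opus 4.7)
The plan is to prove the key point-wise equivalence
\[
v \in U(a) \iff a \in \omega(v),
\]
and then observe that the proposition follows from it by unwinding the definition of GIT-equivalence. Indeed, $U(a)=U(b)$ means that $v\in U(a) \Leftrightarrow v\in U(b)$ for every $v\in\mathbb{V}$, which under the equivalence above is exactly the condition on membership in $\omega(v)$ stated in the proposition.

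First I would prove the ``$\Rightarrow$'' direction of the key equivalence. If $v\in U(a)$, then by the very definition of $U(a)$ there are $k\in\mathbb{N}$ and $F\in P(ka)^G$ with $F(v)\neq 0$, so $ka$ is one of the generating multidegrees of $\omega(v)$. Since $\omega(v)$ is a rational cone (stable under multiplication by positive rationals), this gives $a=\tfrac{1}{k}(ka)\in\omega(v)$.

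Next I would prove the converse ``$\Leftarrow$'' direction, which is the only step that requires a small construction. Assume $a\in\omega(v)$. By Lemma~\ref{l1}, $\omega(v)=\cone(a(i)\mid F_i(v)\ne 0)$, so there exist nonnegative rationals $\lambda_i$, indexed by those $i$ with $F_i(v)\ne 0$, such that $a=\sum_i \lambda_i a(i)$. Multiplying by a common denominator, we get $ka=\sum_i c_i a(i)$ with $k,c_i\in\mathbb{Z}_{\geqslant 0}$, where still the sum runs only over $i$ with $F_i(v)\ne 0$. The monomial $F=\prod_i F_i^{c_i}$ then lies in $P(ka)^G$ and satisfies $F(v)=\prod_i F_i(v)^{c_i}\ne 0$, so $v\in U(a)$.

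The main (and really only) point needing care is the construction in the converse direction: one must use Lemma~\ref{l1} rather than the raw definition of $\omega(v)$, so that the expression of $a$ uses only generators $F_i$ not vanishing at $v$; this is what guarantees that the product $\prod F_i^{c_i}$ is non-zero at $v$. Everything else is just a direct unfolding of the definitions, and the proposition drops out immediately from the pointwise equivalence.
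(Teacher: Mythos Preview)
Your proof is correct and uses essentially the same ingredients as the paper: Lemma~\ref{l1} to express $a$ via generators $a(i)$ with $F_i(v)\ne 0$, clearing denominators, and forming the monomial $\prod F_i^{c_i}$. The only difference is organizational: you isolate the equivalence $v\in U(a)\Leftrightarrow a\in\omega(v)$ at the outset and derive both directions of the proposition from it, whereas the paper proves the forward direction by carrying out this construction in-line and only states the equivalence explicitly for the converse; your presentation is arguably cleaner but the content is the same.
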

\begin{proof}
Suppose $U(a)=U(b)$ and $a \in \omega(v)$. It follows from Lemma
\ref{l1} that
$$a\hm=\lambda_{i_1}a(i_1)\hm+\ldots\hm+\lambda_{i_s}a(i_s),$$
where $F_{i_j}(v)\ne 0, \, j=1,\ldots,s$, $\lambda_j \in \mathbb
Q_{\geqslant 0}$. Multiplying this equation by the common
denominator of $\lambda_1,\ldots,\lambda_r$, we get
$$ka\hm=p_1a(i_1)\hm+\ldots\hm+p_sa(i_s),$$ where $k,p_1,\ldots,p_s
\in \mathbb Z_{\geqslant 0}$. Then $F_{i_1}^{p_1}\ldots
F_{i_s}^{p_s} \in P(ka)^G$ does not vanish at $v$, and hence ${v
\in~ U(a)\hm=U(b)}$. Therefore there exist $l \in \mathbb N$ and
$F \in P(lb)^G$ such that $F(v) \ne 0$. Thus $lb \in \omega(v)$
and $b \in \omega(v)$. Similarly if $b \in \omega(v)$, then $a \in
\omega(v)$.

It can easily be checked that $a \in \omega(v)$ if and only if $v
\in U(a)$. If for any $v \in \mathbb{V}$ either $a \in \omega(v)$
and $b \in \omega(v)$, or $a \notin \omega(v)$ and $b \notin
\omega(v)$ hold, then for any  $v \in \mathbb{V}$ we have either
$v \in U(a)$ and $v \in U(b)$, or $v \notin U(a)$ and $v \notin
U(b)$. Hence $U(a)=U(b)$.
\end{proof}

{\itshape The GIT-cone} of a point $a \in \mathbb Z_{\geqslant
0}^m$ is the cone $\tau(a)\hm= \bigcap\limits_{a \in \omega(v)}
\omega(v)$.

Recall that a finite set $\{\sigma_i\}$ of cones in $\mathbb Q^m$
is called {\itshape a fan}, if

$(1)$ each face of a cone in $\{\sigma_i\}$ is also a cone in
$\{\sigma_i\}$;

$(2)$ the intersection of two cones in $\{\sigma_i\}$ is a face of
each.

\pagebreak
\begin{theorem}
The set of the cones $\Psi=\{\tau(a)\,|\, a \in \Omega\}$ is a
fan.
\end{theorem}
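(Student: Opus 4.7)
The plan is to leverage the finiteness of orbit cones established in the Corollary and Proposition~\ref{pr1} so as to reduce the two fan axioms to combinatorial statements about intersections of orbit cones. I would enumerate the orbit cones as $\omega_1,\dots,\omega_N$ and, for $a\in\Omega$, set $I(a)=\{j:a\in\omega_j\}$, so that $\tau(a)=\bigcap_{j\in I(a)}\omega_j$. Since $\tau(a)$ is completely determined by the subset $I(a)\subseteq\{1,\dots,N\}$, the collection $\Psi$ is automatically finite, and each $\tau(a)$ is a rational polyhedral cone as a finite intersection of such.

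For axiom (2), given $a,b\in\Omega$ one computes $\tau(a)\cap\tau(b)=\bigcap_{j\in I(a)\cup I(b)}\omega_j$. I would pick $c$ in the relative interior of this intersection and prove $\tau(c)=\tau(a)\cap\tau(b)$. The inclusion $\tau(c)\subseteq\tau(a)\cap\tau(b)$ is immediate from $I(a)\cup I(b)\subseteq I(c)$. The reverse inclusion rests on the following key claim: if $c$ lies in the relative interior of $\sigma=\bigcap_{j\in J}\omega_j$ and $c\in\omega_k$, then $\omega_k\supseteq\sigma$. Granting this, $\tau(c)=\tau(a)\cap\tau(b)$, and this common cone is a face of $\tau(a)$ because each $\omega_j$ with $j\in I(b)\setminus I(a)$ contains $\tau(a)\cap\tau(b)$ but not $a$, hence carries a supporting hyperplane of $\tau(a)$ cutting out $\tau(a)\cap\tau(b)$ as a face. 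Axiom (1) is handled by the same device: for any face $F$ of $\tau(a)$, picking a relative-interior point $c\in F$ yields $\tau(c)=F$.

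The main obstacle is the convex-geometric key claim used above. It is here that the specific structure of orbit cones must enter: by Lemma~\ref{l1} each $\omega_j$ is spanned by a subset of the common finite set $\{a(1),\dots,a(r)\}$, so the collection of orbit cones embeds into the chamber complex of this vector configuration, and the relative interior of any intersection of orbit cones is forced to consist of points with a constant incidence pattern. Without this common-generator property the analogous statement for arbitrary convex cones would fail, so its verification is the genuinely non-formal content of the theorem; the remainder is routine bookkeeping in convex geometry.
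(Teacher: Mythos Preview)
The paper itself does not prove this theorem; it simply refers to \cite[Theorem~2.11]{2}. Your proposal goes further by sketching an argument, and you correctly locate the crux in what you call the ``key claim'', but that claim is false for orbit cones, and the common-generator property from Lemma~\ref{l1} does not rescue it. For a concrete failure (in the general Berchtold--Hausen setting the paper is invoking), let $T=(\mathbb{C}^\times)^2$ act linearly on $\mathbb{C}^3$ with weights $(1,0),(0,1),(1,1)$: then $\omega((1,1,0))=\mathbb{Q}^2_{\geqslant 0}$ and $\omega((0,0,1))=\cone((1,1))$ are both orbit cones, both generated by subsets of the fixed list $\{(1,0),(0,1),(1,1)\}$, yet with $\sigma=\mathbb{Q}^2_{\geqslant 0}$ the point $c=(1,1)$ lies in the relative interior of $\sigma$ and in $\omega((0,0,1))$, while $\omega((0,0,1))\not\supseteq\sigma$. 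So the incidence pattern is \emph{not} constant on the relative interior of an arbitrary intersection of orbit cones, contrary to what you assert; the ``chamber complex'' heuristic does not pin things down because a generator $a(i)$ can sit in the interior of a cone spanned by the others.

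What is missing is the genuinely geometric input that \cite{2} uses and that your purely combinatorial reduction cannot supply: every face of an orbit cone is again an orbit cone, because one can degenerate the underlying point along a one-parameter subgroup of $T$. With this in hand one replaces, for each orbit cone $\omega(v)\ni a$, the cone $\omega(v)$ by its minimal face through $a$; then $\tau(a)$ becomes an intersection of orbit cones each containing $a$ in its \emph{relative interior}, whence $a$ lies in the relative interior of $\tau(a)$, and the fan axioms follow. Your supporting-hyperplane argument for axiom~(2) has the same defect: for $j\in I(b)\setminus I(a)$ there is no a~priori reason, absent this face-closure property, that $\omega_j\cap\tau(a)$ should be a face of $\tau(a)$ rather than a slice through its interior.
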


The proof may be found in \cite[Theorem 2.11]{2}.

The fan $\Psi$ is called the {\itshape GIT-fan}. It follows from
Proposition~\ref{pr1} that the classes of GIT-equivalence are
relative interiors of GIT-cones.

\vspace{0.5cm} Let $T=(\mathbb {C}^\times)^m$ be a torus. It acts
on the space
 $\mathbb{V}$ as $$t\circ
(v_1,\ldots,v_{m_1},l_1,\ldots,l_{m_2})=(t_1v_1,\ldots,t_{m_1}v_{m_1},s_1l_1,\ldots,s_{m_2}l_{m_2}),$$
where $t=(t_1,\ldots,t_{m_1},s_1,\ldots,s_{m_2})\in T$,
$(v_1,\ldots,v_{m_1},l_1,\ldots,l_{m_2})\in \mathbb{V}$. This
action commutes with the action of $G$, hence the action of $T$ on
the categorical quotient $\mathbb{V}/\!/G:=\spec {\mathbb
C}[{\mathbb V}]^G$ is well defined.

Consider a point $v \in \mathbb{V}$. It is not hard to see that
$$\dim
\omega(v)+\dim T_{\pi(v)}=\dim T,$$ where ${\pi: \mathbb{V}
\longrightarrow \mathbb{V}/\!/G}$ is the quotient morphism, and
$T_{\pi(v)}$ is the stabilizer of the point $\pi(v)$.

Note that our definition of the orbit cone agrees with
\cite[Definition 2.1]{2} for the action of the torus $T$ on the
variety $\mathbb{V}/\!/G$.

\section{The case of $SO(V)$}

Consider $G=SO(V)$, $\dim V \geqslant 3$. Let $(.,.)$ be a
non-degenerate symmetric bilinear form on $V$ preserved by
$SO(V)$. Since $V$ is $G$-isomorphic to its dual $V^*$, we can
assume that $m_2=0,\, m=m_1$, and $\mathbb{V}=V^m$. Let us
construct the GIT-fan for the diagonal action $SO(V)$ on the
variety $\mathbb{P}(V)^m$.

The algebra of invariants for the action $SO(V)$ on $\mathbb{V}$
is generated by ${u_{ij}=(v_i,v_j)}$, where $(v_1,\ldots,v_m) \in
V^{m}$ \cite[$\S$ 9.3]{0}. Here the multidegrees are
$f_{ii}:=(0,\ldots,0,\underbrace {2}_{i},0,\ldots,0)$ and
$f_{ij}:=(0,\ldots,0,\underbrace1_i,0,\ldots,0,\underbrace1_j,0,\ldots,0)$,
${i,j=1,\ldots,m}$. The morphism $\pi: \mathbb{V} \longrightarrow
\mathbb{V}/\!/G$ sends $(v_1,\ldots,v_m)$ to the symmetric matrix
$\left( (v_i,v_j) \right) _{i,j=1}^m$.

The GIT-fan is contained in $\mathbb {Q}^{m}$. Let
$x_1,\ldots,x_m$ be the coordinates in this space.

Clearly, the weight cone $\Omega$ generated by $f_{ij}$ is given
by inequalities
$$x_i\geqslant0,\, i=1,\ldots,m.$$

\begin{predl} \label{pr2} Each $(m-1)$-dimensional orbit cone lies
in some hyperplane
\begin{equation}
\sum_{i \in I} x_i=\sum_{j \in J} x_j, \label{f1}
\end{equation} where
$I, J \subset \{1,\ldots, m\}$, $I \ne \varnothing$, $J \ne
\varnothing$, $I\cap J = \varnothing$.
\end{predl}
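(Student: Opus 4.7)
The plan is to read off $\omega(v)$ combinatorially from the pattern of non-zero scalar products. By Lemma \ref{l1}, $\omega(v)=\cone(f_{ij}\mid (v_i,v_j)\ne 0)$, so I associate to $v$ the graph $\Gamma_v$ on vertex set $\{1,\ldots,m\}$ whose edges are the pairs $\{i,j\}$ with $(v_i,v_j)\ne 0$, allowing a loop at $i$ when $(v_i,v_i)\ne 0$. The task becomes to compute the dimension of the $\mathbb{Q}$-span of the generators $f_{ij}$ in $\mathbb{Q}^m$ and then, when this dimension equals $m-1$, to identify the hyperplane containing $\omega(v)$.

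First I would compute this dimension one connected component of $\Gamma_v$ at a time. Writing $e_1,\ldots,e_m$ for the standard basis, so that $f_{ii}=2e_i$ and $f_{ij}=e_i+e_j$, consider a connected component $C\subseteq\{1,\ldots,m\}$. If $C$ contains a loop at some $i$, then $f_{ii}=2e_i$ recovers $e_i$ and, traversing the edges of $C$ via $f_{ik}-e_i=e_k$, we recover every $e_k$ with $k\in C$, so the generators from $C$ span $\mathrm{span}(e_k:k\in C)$, of dimension $|C|$. If $C$ is loopless but contains an odd cycle, an alternating sum of the $f_{ij}$'s along the cycle produces $2e_i$ for some $i\in C$, and we reduce to the previous case. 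If $C$ is loopless and bipartite with parts $A_C,B_C$ and has at least one edge, every $f_{ij}$ coming from $C$ satisfies $\sum_{i\in A_C}x_i=\sum_{j\in B_C}x_j$, and a spanning tree of $C$ yields $|C|-1$ linearly independent such vectors, so the span equals this hyperplane inside $\mathrm{span}(e_k:k\in C)$, of dimension $|C|-1$. An isolated vertex (no loop, no edges) contributes $0$.

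Summing over components and using $m=\sum_C|C|$, the dimension of $\omega(v)$ falls short of $m$ precisely by the number of loopless bipartite components with at least one edge plus the number of isolated vertices. Thus $\dim\omega(v)=m-1$ forces exactly one component of $\Gamma_v$ to be either (a) a loopless bipartite component with both parts $I,J$ non-empty, or (b) a single isolated vertex, every other component containing a loop or an odd cycle. Case (a) puts $\omega(v)$ inside the hyperplane $\sum_{i\in I}x_i=\sum_{j\in J}x_j$, matching the asserted form. Case (b) yields the coordinate hyperplane $x_{i_0}=0$, a facet of the weight cone $\Omega$; it is tacitly excluded by the requirement $I,J\ne\varnothing$.

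The main technical obstacle is the bipartite case: one must verify that the $|C|-1$ vectors from a spanning tree of a connected bipartite graph are linearly independent and actually fill out the hyperplane $\sum_{A_C}x_i=\sum_{B_C}x_j$. This reduces to a standard induction on $|C|$, deleting a leaf and applying the inductive hypothesis. The odd-cycle identification requires a careful sign computation for the alternating sum but is routine; once these linear-algebra facts are in place, the proposition follows from the component-by-component decomposition.
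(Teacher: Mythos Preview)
Your argument is correct and follows the same combinatorial skeleton as the paper's proof: both attach to $v$ the graph $\Gamma_v$ with an edge $\{i,j\}$ whenever $(v_i,v_j)\ne 0$, and both analyse the dimension component by component according to whether the component carries a loop or odd cycle versus being bipartite. The difference lies in how $\dim\omega(v)$ is computed. You work directly with the linear span of the generators $f_{ij}=e_i+e_j$ supplied by Lemma~\ref{l1}; the paper instead invokes the identity $\dim\omega(v)+\dim T_{\pi(v)}=m$ and computes the stabiliser of $\pi(v)$ under the torus $T=(\mathbb{C}^\times)^m$, reading off the relations $t_it_j=1$ from the nonvanishing $(v_i,v_j)$. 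These are dual calculations---the stabiliser equations cut out exactly the orthogonal complement of your span---so the bipartite/non-bipartite dichotomy and the resulting hyperplane come out identically. Your route is a little more elementary since it bypasses the torus machinery; the paper's route has the advantage that the same stabiliser argument is recycled in the $SL(V)$ section.

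Your remark about case~(b) is sharp: the paper's proof classifies an isolated loopless vertex as a ``type~B'' component without noting that one of the two parts of its bipartition is empty, so the resulting hyperplane $x_{i_0}=0$ does not literally fit the form asserted in the proposition with $I,J\ne\varnothing$. This is harmless for Theorem~\ref{t2}, since $x_{i_0}=0$ is a facet of $\Omega$ rather than a cutting hyperplane, but it is a genuine edge case the paper glosses over and you correctly flagged.
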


\begin{proof}
The torus $T=(\mathbb {C}^\times)^m$ acts on $V^m$: ${t\circ
(v_1,\ldots,v_m)}=(t_1v_1,\ldots,t_mv_m).$ Then ${t\circ (v_i,
v_j)\hm=t_it_j(v_i,v_j)}$.  The orbit cone associated to
$v\hm=(v_1,\ldots,v_m)$ is $(m-1)$-dimensional if and only if the
stabilizer $T_{\pi(v)}$ of the point $\pi(v)$ is one-dimensional.

Consider a graph $\Gamma_v$ with the set of vertices
$\{v_1,\ldots,v_m\}$. By definition, $v_i$ and $v_j $ are joined
by an edge in $\Gamma_v$ if and only if $(v_i,v_j)\neq 0$. Assume
that $(v_i,v_j)\neq 0$. Then any $t\in T_{\pi(v)}$ satisfies
$t_i=t_{j}^{-1}$.

Let $\Gamma_v=\Gamma_1\sqcup\ldots\sqcup\Gamma_l$ be the
decomposition into connected components. If $\Gamma_k$ contains a
cycle of odd length or a loop (type A), then $t_i^2=1$ for all
$v_i \in \Gamma_k$ and $t\in T_{\pi(v)}$. In other case (type B),
it is possible to divide the set of vertices of $\Gamma_k$ into
two subsets. For a point of the first subset $t_i=s_k$ holds, and
for a point of the second subset we have $t_i=(s_k)^{-1}$, where
$s_k\in \mathbb{C}^\times$. The stabilizer is one-dimensional if
and only if there is only one component of type B in the
graph~$\Gamma_v$. Denote by $I$ and $J$ the sets of vertices in
the first and the second subsets of this component. The weight
$f_{ij}$ lies in $\omega(v)$ if and only if $i \in I, j \in J$ or
$j \in I, i \in J$. Hence the orbit cone is contained in
hyperplane~\eqref{f1}.
\end{proof}

It follows from Proposition \ref{pr2} that if dimension of the
orbit cone $\omega(v)$ is less than $m-1$, then $\omega(v)$ lies
in the intersection of some $(m-1)$-dimensional orbit cones. Thus
two points are GIT-equivalent if and only if they lie in the same
$m$-dimensional and $(m-1)$-dimensional orbit cones.

\begin{theorem} \label{t2} For the diagonal action of the group $SO(V)$ on the variety
$\mathbb{P}(V)^m$ the GIT-fan is obtained by cutting of the cone
$$\Omega=\{(x_1,\ldots,x_m)\, | \, x_i\geqslant0\}$$ by hyperplanes
$$\sum_{i \in I} x_i=\sum_{j
\in J} x_j, \leqno(\ref{f1})$$ where $I, J \subset \{1,\ldots,
m\}$, $I \ne \varnothing$, $J \ne \varnothing$, $I\cap J =
\varnothing$.
\end{theorem}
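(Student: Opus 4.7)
The plan is to combine Proposition~\ref{pr2} with the criterion stated in the paragraph just before the theorem: two points of $\Omega$ are GIT-equivalent if and only if they belong to the same collection of $m$-dimensional and $(m-1)$-dimensional orbit cones. Hence the walls of the GIT-fan inside $\Omega$ are supported either by $(m-1)$-dimensional orbit cones or by facets of $m$-dimensional ones, and my task splits into showing that (a) every hyperplane \eqref{f1} is the affine span of some $(m-1)$-dimensional orbit cone, and (b) every facet of an $m$-dimensional orbit cone lies in a hyperplane of form \eqref{f1} or on a coordinate hyperplane $\{x_k=0\}$.

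For~(a) I would construct, for each pair of disjoint nonempty $I,J\subset\{1,\ldots,m\}$, a configuration $v=(v_1,\ldots,v_m)$ whose orbit cone has dimension $m-1$ and spans the hyperplane $\sum_{i\in I}x_i=\sum_{j\in J}x_j$. Choose a hyperbolic pair $v_I,v_J\in V$ (both isotropic, $(v_I,v_J)\neq 0$) and a non-isotropic vector $w\in\{v_I,v_J\}^{\perp}$, which exists because $\dim V\geq 3$; set $v_i=v_I$ for $i\in I$, $v_j=v_J$ for $j\in J$, and $v_k=w$ for $k\notin I\cup J$. Then $\Gamma_v$ has a single type-B component on $I\sqcup J$ and one type-A component (a loop at every vertex) on the complement, so its stabilizer is one-dimensional; the weights $f_{ij}$ with $i\in I$, $j\in J$ together with the weights $f_{kk'}$ with $k,k'\notin I\cup J$ then span exactly the required $(m-1)$-dimensional subspace of the hyperplane \eqref{f1}.

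For~(b) I would argue combinatorially. Let $\ell=\sum c_k x_k$ support a facet $F$ of an $m$-dimensional orbit cone $\omega(v)$, normalized so that $\ell\geq 0$ on $\omega(v)$, and partition the index set $\{1,\ldots,m\}$ into $I=\{k:c_k>0\}$, $J=\{k:c_k<0\}$, $K=\{k:c_k=0\}$. The conditions $c_i+c_j\geq 0$ on edge-generators $f_{ij}\in\omega(v)$ and $c_i\geq 0$ on loop-generators $f_{ii}$ rule out edges inside $J$, loops in $J$, and $K$--$J$ edges of $\Gamma_v$; on $F$ these inequalities become equalities. A dimension count using $\dim F=m-1$ then forces the bipartite subgraph of facet edges between $I$ and $J$ to cover all of $I\cup J$ and to be connected. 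Propagating $c_i=-c_j$ along this connected bipartite subgraph forces $|c_k|$ to be the same for every $k\in I\cup J$, so after rescaling $\ell=\sum_{i\in I}x_i-\sum_{j\in J}x_j$; the degenerate cases $I=\emptyset$ or $J=\emptyset$ collapse to $\ell=\pm x_{k_0}$, giving a facet of $\Omega$.

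Combining (a) and (b), the set of hyperplanes supporting walls of the GIT-fan inside $\Omega$ is exactly $\{\sum_{i\in I}x_i=\sum_{j\in J}x_j\}\cup\{x_k=0\}$, so the maximal GIT-cones are precisely the chambers cut out by this arrangement in $\Omega$, as claimed. The main obstacle is step~(b), specifically the dimension count that forces the bipartite facet-subgraph to be connected and to span $I\cup J$; without this connectivity, the $|c_k|$ could take several distinct values and $\ell$ would fail to have the claimed form.
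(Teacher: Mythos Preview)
Your part (a) is exactly the paper's construction: choose an isotropic pair $(1,\imath,0,\dots)$, $(1,-\imath,0,\dots)$ for $I,J$ and a non-isotropic vector $(0,0,1,0,\dots)$ on the complement. One point worth tightening: the paper does not stop at ``spans the hyperplane'' but checks that the rays $\langle f_{kj}\rangle$ ($k\in I$, $j\in J$) and $\langle f_{ll}\rangle$ ($l\notin I\cup J$) are precisely the extreme rays of $\Omega\cap H_{I,J}$, so that the constructed orbit cone \emph{equals} the full slice $\Omega\cap H_{I,J}$. This full equality is what guarantees that the whole hyperplane section really functions as a wall of the fan, not just some piece of it; you should record this.

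Where you diverge from the paper is part (b). The paper does not analyse facets of $m$-dimensional orbit cones at all; it simply asserts ``it is sufficient to find all $(m-1)$-dimensional orbit cones'' and relies on the general torus-action framework of \cite{2}. The relevant fact, implicit in that framework, is that \emph{every face of an orbit cone is again an orbit cone}: if $\ell\geqslant 0$ supports a face $F$ of $\omega(v)$, take the one-parameter subgroup $\lambda$ dual to $\ell$ and set $v'=\lim_{t\to 0}\lambda(t)\!\cdot\! v$ in $\mathbb{V}/\!/G$; then $u_{ij}(v')\ne 0$ exactly when $u_{ij}(v)\ne 0$ and $\ell(f_{ij})=0$, so $\omega(v')=F$. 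Once you have this, your (b) is immediate: a facet of an $m$-dimensional $\omega(v)$ is an $(m-1)$-dimensional orbit cone, hence by Proposition~\ref{pr2} lies in a hyperplane~\eqref{f1} (or in some $\{x_k=0\}$, coming from an isolated vertex of $\Gamma_{v'}$). Your direct combinatorial attack on~(b) --- partitioning indices by the sign of $c_k$ and arguing connectivity of the bipartite facet subgraph --- can be made to work, but it is considerably harder than this one-line degeneration argument, and the ``dimension count forcing connectivity'' step you flag as the main obstacle is genuinely delicate. I would replace it with the face-of-orbit-cone fact.
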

\begin{proof} It is sufficient to find all $(m-1)$-dimensional orbit
cones. It follows from Proposition \ref{pr2}  that we should only
prove that the intersection of each hyperplane \eqref{f1} with the
cone $\Omega$ is the orbit cone for some point $v$.

Let $v_k=(1,\imath,0,\ldots,0)$ for $k \in I$,
$v_j=(1,-\imath,0,\ldots,0)$ for $j \in J$, and
$v_l=(0,0,1,0,\ldots,0)$ for $l \not\in I\cup J$. (Here
${\imath^2=-1}$). The orbit cone associated to $v$ is generated by
the weights $f_{kj} (k\in I,\,j \in J)$ and $f_{ll} (l \not\in
I\cup J)$. Hence $\omega(v)$ is $(m-1)$-dimensional and lies in
hyperplane \eqref{f1}. It is easy to check that the rays $\langle
f_{kj}\rangle (k\in I,\,j \in J)$ and $\langle f_{ll}\rangle (l
\not\in I\cup J)$ are precisely the edges of the intersection of
$\Omega$ with hyperplane \eqref{f1}. This completes the proof of
Theorem \ref{t2}.

\end{proof}

{\bfseries Example.} Consider the action of $\text{SO}_3$ on the
space ${\mathbb{C}^3 \oplus \mathbb{C}^3 \oplus \mathbb{C}^3}.$

The weight cone is the cone $\Omega=\{(x_1,x_2,x_3) \in
\mathbb{Q}^3\,|\, x_1\geqslant 0,x_2\geqslant 0,x_3\geqslant 0\}$.
The GIT-fan is obtained by cutting of the cone $\Omega$ by
hyperplanes
$$x_1=x_2,\,\, x_1=x_3,\,\, x_2=x_3,$$
$$x_1+x_2=x_3,\,\, x_1+x_3=x_2,\,\, x_2+x_3=x_1.$$

The intersection of the GIT-fan with the hyperplane
${x_1+x_2+x_3=1}$ looks like:

\begin{center}
\begin{picture}(120,100)
\thicklines \put(0,0){\line(1,0){120}} \put(0,0){\line(2,3){60}}
\put(120,0){\line(-2,3){60}} \put(60,0){\line(0,1){90}}
\put(30,45){\line(1,0){60}} \put(30,45){\line(2,-3){30}}
\put(90,45){\line(-2,-3){30}} \put(0,0){\line(2,1){90}}
\put(120,0){\line(-2,1){90}}
\end{picture}
\end{center}
\hspace{0.5cm}

There are $33$ classes of GIT-equivalence: $12$ classes are
three-dimensional, $21$ classes are two-dimensional, and $10$
classes are one-dimensional.

\section{The case of $SL(V)$}

Consider $G=SL(V)$. Let us construct the GIT-fan for the diagonal
action $SL(V)$ on the variety ${\mathbb{P}(V)^{m_1}\times
\mathbb{P}(V^*)^{m_2}}$.

The algebra of invariants $\mathbb {C}[\mathbb{V}]^{SL(V)}$, where
$\mathbb{V}=V^{m_1}\hm\oplus (V^*)^{m_2}$, is generated by
$\det(v_{i_1},\ldots,v_{i_n})$, $\det(l_{j_1},\ldots,l_{j_n})$,
and $l_j(v_i)$, where $(v_1,\ldots,v_{m_1},l_1, \ldots, l_{m_2})
\in V^{m_1} \oplus (V^*)^{m_2}$ \cite[$\S$ 9.3]{0}. Here the
multidegrees are $$f_{i_1\ldots i_n}=(\alpha^1_{i_1\ldots
i_n},\ldots,\alpha^{m_1}_{i_1\ldots i_n},\underbrace {0, \ldots
,0}_{m_2}),\,\, g_{j_1\ldots
j_n}=(\underbrace{0,\ldots,0}_{m_1},\beta^1_{j_1,\ldots
j_n},\ldots,\beta^{m_2}_{j_1\ldots j_n}),$$ $$\mbox{and}\,\,\,
h_{ij}=(\varepsilon^1_{ij},\ldots,\varepsilon^{m_1}_{ij},\delta^1_{ij},\ldots,\delta^{m_2}_{ij}),$$
where $$\alpha^{i_1}_{i_1\ldots
i_n}\hm=\ldots=\alpha^{i_n}_{i_1\ldots
i_n}\hm=\beta^{j_1}_{j_1\ldots
j_n}\hm=\ldots\hm=\beta^{j_n}_{j_1\ldots
j_n}\hm=\varepsilon^i_{ij}\hm=\delta^j_{ij}\hm=1,$$ and other
numbers are zero.

The GIT-fan is contained in $\mathbb {Q}^{m}$. Let
$x_1,\ldots,x_{m_1},$ $y_1,\ldots,y_{m_2}$ be the coordinates in
this space.

First, suppose that $m_1\geqslant n$ or $m_2\geqslant n$.

\begin{predl} Each $(m-1)$-dimensional orbit cone lies
in one of the hyperplanes
\begin{gather} x_i=0, \,  i=1,\ldots,m_1, \label{f2}\\ y_j=0, \,
j=1,\ldots,m_2, \label{f3} \\
x_1+\ldots+x_{m_1}=y_1+\ldots+y_{m_2}, \label{f4}\\
(n-k)\sum_{i \in I}x_i-k\sum_{i \not \in I}x_i=(n-k)\sum_{j \in
J}y_j-k\sum_{j \not \in J}y_j, \label{f5} \end{gather} where
$1\leqslant k \leqslant n-1$, $I \subset \{1,\ldots,m_1\},$ $J
\subset \{1,\ldots,m_2\}$, and either $k\leqslant |I| \leqslant
m_1-n+k$ or ${k\leqslant |J| \leqslant m_2-n+k}$.
\end{predl}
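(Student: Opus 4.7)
The plan is to mirror the strategy of the proof of Proposition~\ref{pr2}. The orbit cone $\omega(v)$ is $(m-1)$-dimensional if and only if the stabilizer $T_{\pi(v)}$ in the torus $T=(\mathbb{C}^\times)^m$ is one-dimensional, and in that case the unique hyperplane containing $\omega(v)$ is the annihilator of the Lie algebra $\mathfrak{t}_{\pi(v)}$. A vector $\xi=(a_1,\ldots,a_{m_1};b_1,\ldots,b_{m_2})\in\mathbb{Q}^m$ lies in $\mathfrak{t}_{\pi(v)}$ precisely when it annihilates the weight of every generator of $\mathbb{C}[\mathbb{V}]^{SL(V)}$ non-vanishing at $v$; reading off the three families of generators this becomes $a_i+b_j=0$ whenever $l_j(v_i)\ne 0$, $a_{i_1}+\cdots+a_{i_n}=0$ for every non-vanishing $\det(v_{i_1},\ldots,v_{i_n})$, and analogously $b_{j_1}+\cdots+b_{j_n}=0$ for every non-vanishing $\det(l_{j_1},\ldots,l_{j_n})$. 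I will then classify all such one-dimensional $\mathfrak{t}_{\pi(v)}$ and show that the generator of each is proportional to the normal of one of the hyperplanes \eqref{f2}--\eqref{f5}.

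The case $v_i=0$ is immediate: no non-vanishing invariant involves $v_i$, so $\omega(v)\subseteq\{x_i=0\}$, hyperplane~\eqref{f2}; the case $l_j=0$ yields~\eqref{f3}. Assuming $v_i,l_j\ne 0$ for all $i,j$, I introduce the bipartite graph $\Gamma_v$ on $\{v_1,\ldots,v_{m_1}\}\sqcup\{l_1,\ldots,l_{m_2}\}$ with edge $(v_i,l_j)$ whenever $l_j(v_i)\ne 0$. The edge conditions force $a_i$ and $-b_j$ to share a common value $\alpha_p\in\mathbb{Q}$ on each connected component $C_p$ of $\Gamma_v$, so $\xi$ is encoded by $(\alpha_1,\ldots,\alpha_N)$ where $N$ is the number of components. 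The determinantal conditions become linear relations $\sum_p n_p\alpha_p=0$ with $\sum_p n_p=n$ and $n_p\in\mathbb{Z}_{\geqslant 0}$, one per non-vanishing determinant.

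If no determinantal relation is active, then $\dim\mathfrak{t}_{\pi(v)}=N$ and the one-dimensionality forces $N=1$, yielding $\xi\propto(1,\ldots,1;-1,\ldots,-1)$ and hyperplane~\eqref{f4}. Otherwise I intend to show that the $\alpha_p$'s are forced to take at most two distinct values $c_1$ and $c_2$; any active relation $k'c_1+(n-k')c_2=0$ then pins down the ratio $c_1:c_2$, and after rescaling so that $(c_1,c_2)=(n-k,-k)$ and setting $I=\{i:\alpha_{C(v_i)}=n-k\}$, $J=\{j:\alpha_{C(l_j)}=n-k\}$, the vector $\xi$ is precisely the normal to hyperplane~\eqref{f5}. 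The range conditions $k\leqslant|I|\leqslant m_1-n+k$ or $k\leqslant|J|\leqslant m_2-n+k$ reflect the combinatorial requirement that an $n$-subset of $\{1,\ldots,m_1\}$ (respectively $\{1,\ldots,m_2\}$) with exactly $k$ elements in $I$ (respectively $J$) exists to carry a non-vanishing determinant imposing the relation; this is where the standing hypothesis $m_1\geqslant n$ or $m_2\geqslant n$ enters.

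The principal obstacle is justifying the two-value claim for the $\alpha_p$. Since the vanishing pattern of the $n\times n$ minors of $[v_1\mid\cdots\mid v_{m_1}]$ and $[l_1\mid\cdots\mid l_{m_2}]$ is governed by the linear matroids on $\{1,\ldots,m_1\}$ and $\{1,\ldots,m_2\}$, the admissible index distributions $(n_p)$ appearing in the relations are tightly constrained. I expect the argument to use a basis-exchange/Plücker-type computation: if three or more distinct values appeared among the $\alpha_p$, swapping indices between two non-vanishing $n$-minors with different index distributions would produce a third linearly independent relation, contradicting $\dim\mathfrak{t}_{\pi(v)}=1$.
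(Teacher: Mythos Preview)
Your framework---computing the one-dimensional Lie algebra $\mathfrak{t}_{\pi(v)}$ from the edge and determinantal relations---matches the paper's, and your treatment of hyperplanes \eqref{f2}--\eqref{f4} is correct. The genuine gap is precisely what you flag as ``the principal obstacle'': the two-value claim for the $\alpha_p$. Your sketch (``swapping indices between two non-vanishing $n$-minors with different index distributions would produce a third linearly independent relation'') is not yet a proof; you have not explained why the new relation is independent of the old ones, nor why iterating the swap must terminate with only two surviving values rather than, say, a one-dimensional space of $(\alpha_1,\ldots,\alpha_N)$ with three or more distinct coordinates.

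The paper resolves this not by chasing relations among the $\alpha_p$ but by a structural move on $V$ itself. In the full-rank case ($\langle v_i\rangle=V$ and $\langle l_j\rangle=V^*$), fix a non-vanishing $\det(v_{i_1},\ldots,v_{i_n})$; if $v_i$ involves $v_{i_s}$ in its expansion with respect to this basis, then replacing $v_{i_s}$ by $v_i$ gives another non-vanishing determinant, forcing $t_i=t_{i_s}$. This basis-exchange argument yields a decomposition $V=\bigoplus_l V_{k_l}$ (and dually $V^*=\bigoplus_l W_{\widetilde k_l}$) into $T_{\pi(v)}$-eigenspaces. One then forms a coarser graph $\Gamma'_v$ on these summands, joining $V_k$ to $W_{\widetilde k}$ when some $l_j(v_i)\ne0$ links them; on each connected component $H_i$ the eigenvalue is a single $t'_i$, and a dimension count gives $\dim V'_i=\dim W'_i$, so the stabilizer is cut out by the single equation $\prod_i (t'_i)^{\dim V'_i}=1$ in $p$ variables. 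One-dimensionality forces $p=2$, which is exactly your two-value claim, and taking $k=\dim V'_1$ produces hyperplane \eqref{f5} with both range conditions holding. The mixed-rank case (exactly one of the spans equal to the whole space) is handled analogously and is what produces the ``or'' in the range condition. So your basis-exchange instinct is the right one, but the clean execution is to let the exchange build a decomposition of $V$ rather than to argue directly about individual linear relations among the $\alpha_p$.
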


\begin{proof}
If there is a zero vector or a zero function in the set
$\{v_1,\ldots,v_{m_1},l_1,\ldots,l_{m_2}\}$, then the orbit cone
associated to $v=(v_1,\ldots,v_{m_1},l_1,\ldots,l_{m_2})$ lies in
hyperplane of type \eqref{f2} or \eqref{f3}. Further, we assume
that all the components of $v$ are nonzero.

The torus $T=(\mathbb {C}^\times)^{m}$ acts on $V^{m_1} \oplus
(V^*)^{m_2}$ as above. Then for any $t\in T$ we have $$t\circ
l_j(v_i)=t_is_jl_j(v_i),$$
$$t\circ\det(v_{i_1},\ldots,v_{i_n})=t_{i_1}\ldots
t_{i_n}\det(v_{i_1},\ldots,v_{i_n}),$$
$$t\circ\det(l_{j_1},\ldots,l_{j_n})=s_{j_1}\ldots
s_{j_n}\det(l_{j_1},\ldots,l_{j_n}),$$ and the orbit cone
associated to $v$ is $(m-1)$-dimensional if and only if the
stabilizer $T_{\pi(v)}$ of the point $\pi(v)$ is one-dimensional.

Consider a graph $\Gamma_v$ with
$\{v_1,\ldots,v_m,l_1,\ldots,l_{m_2}\}$ as the set of vertices. By
definition, $v_i$ and $l_j $ are joined by an edge in $\Gamma_v$
if and only if $l_j(v_i)\neq 0$. If vertices
$v_{i_1},v_{i_2},l_{j_1},l_{j_2}$ lie in the same connected
component of the graph~$\Gamma_v$, then any $t\in T_{\pi(v)}$
satisfies $t_{i_1}=t_{i_2}$, $s_{j_1}\hm=s_{j_2}$,
$t_{i_1}=s_{j_1}^{-1}$.

\vspace{0.15cm}{\bfseries Case 1:} $\dim\langle
v_1,\ldots,v_{m_1}\rangle < n$, $\dim\langle
l_1,\ldots,l_{m_2}\rangle < n$.

In this case all the determinants are zero. If the stabilizer
$T_{\pi(v)}$ is one-dimensional, then the graph $\Gamma_v$ is
connected. The orbit cone is generated by the weights
$\{h_{ij}\}$. Their span is $(m-1)$-dimensional and lies in the
hyperplane $x_1\hm+\ldots\hm+x_{m_1}=y_1\hm+\ldots\hm+y_{m_2}$.
Hence the orbit cone lies in hyperplane of type~\eqref{f4}.

\vspace{0.15cm} {\bfseries Case 2:} $\dim\langle
v_1,\ldots,v_{m_1}\rangle = n$, $\dim\langle
l_1,\ldots,l_{m_2}\rangle = n$.

Suppose $\det(v_{i_1}, \ldots, v_{i_n}) \ne 0$. Then
$v_{i_1},\ldots,v_{i_n}$ is a basis of $V$. For any  $t\in
T_{\pi(v)}$ the equation $t_{i_1}\ldots t_{i_n}=1$ holds. If
$v_{i_1}$ occurs in the decomposition of $v_i$ with respect to the
basis $v_{i_1},\ldots,v_{i_n}$, then $\det(v_i, v_{i_2},\ldots,
v_{i_n})\ne 0$ and $t_it_{i_2}\ldots
t_{i_n}\hm=t_{i_1}t_{i_2}\ldots t_{i_n}\hm=1$. Hence
$t_i=t_{i_1}$. Similarly consider other $v_{i_j}, \mbox{where}
\,\, j\hm=2,\ldots,n$. Thus the space $V$ decomposes into the sum
$V\hm=V_{k_1}\hm\oplus\ldots\hm\oplus V_{k_r}$. The torus $T$
multiples any $V_{k_l}$ by $\overline{t}_l$
($\overline{t}_l=t_{i_j}$ for some $j$). In the same way $V^*$
decomposes into the sum
$V^*\hm=W_{\widetilde{k}_1}\hm\oplus\ldots\hm\oplus
W_{\widetilde{k}_q}$, the torus $T$ acts on any $W_{k_l}$ as
multiplication by $\overline{s}_{l}$. Thus any element $t\in
T_{\pi(v)}$ satisfies the conditions  $(\overline{t}_1)^{\dim
V_{k_1}}\ldots(\overline{t}_r)^{\dim V_{k_r}}=1$ and
$(\overline{s}_1)^{\dim W_{\widetilde{k}_1}}\ldots
(\overline{s}_q)^{\dim W_{\widetilde{k}_q}}=1$.

Consider a new graph $\Gamma'_v$ with $V_{k_1},\ldots,V_{k_r}$,
$W_{\widetilde{k}_1},\ldots,W_{\widetilde{k}_q}$ as the set of
vertices. The vertices  $V_k$ and $W_{\widetilde{k}}$ are joined
by an edge in $\Gamma'_v$  if and only if there exist $v_i \in
V_k$ and $l_j \in~W_{\widetilde{k}}$ such that $l_j(v_i) \ne 0$.

Denote by $H_1,\ldots,H_p$ the connected components of the graph
$\Gamma'_v$. Let
$$V'_i\hm=\bigoplus\limits_{V_k \in \, H_i} V_k,\,\,\,
W'_i\hm=\bigoplus\limits_{W_{\widetilde{k}} \in \, H_i}
W_{\widetilde{k}}.$$ Then $T_{\pi(v)}$ multiples $V'_i$ by $t'_i$
and $W'_i$ by $(t'_i)^{-1}$. The stabilizer is given by the
equations
$$(t'_1)^{\dim V'_1}\ldots(t'_p)^{\dim V'_p}=1,$$
$$(t'_1)^{\dim W'_1}\ldots(t'_p)^{\dim W'_p}=1.$$

By construction of $H_i$, all linear functions of $W'_i$ vanish at
all vectors of $V'_j$ for $i \ne j$, hence ${\dim W'_i\leqslant
\dim V'_i}$. But $\sum\limits_{i=1}^p \dim
W'_i\hm=\sum\limits_{i=1}^p \dim V'_i$, hence ${\dim W'_i\hm=\dim
V'_i}$.

Thus the stabilizer is given by the equation
$$(t'_1)^{\dim V'_1}\ldots(t'_p)^{\dim V'_p}=1,$$ and is
one-dimensional if and only if $p=2$.

So if the orbit cone associated to
$v=(v_1,\ldots,v_{m_1},l_1,\ldots,l_{m_2})$ is
$(m-1)$-dimensional, then $V\hm=V_1\oplus V_2$, $V^*=W_1\oplus
W_2$, ${\dim V_1=\dim W_1=k}$, $1\leqslant k\leqslant n-1$; any
vector $v_i$ lies in $V_1$ or in $V_2$; any linear function $l_j$
lies in $W_1$ or in $W_2$; any linear function from $W_j$ is zero
on any vector from $V_i$ for $i \ne j$.

Let $I$ be the set of numbers of vectors $v_i$ from $V_1$, $J$ be
the set of numbers of linear functions  $l_j$ from $W_1$. Then the
orbit cone associated to $v$ lies in hyperplane given by the
equation
$$(n-k)\sum_{i \in I}x_i-k\sum_{i \not \in I}x_i=(n-k)\sum_{j \in
J}y_j-k\sum_{j \not \in J}y_j.$$ Here the inequalities
$k\leqslant |I| \leqslant m_1-n+k$ and $k\leqslant |J| \leqslant
m_2-n+k$ are satisfied.

\vspace{0.15cm}{\bfseries Case 3:} $\dim\langle
v_1,\ldots,v_{m_1}\rangle = n$, $\dim\langle
l_1,\ldots,l_{m_2}\rangle < n$ or $\dim\langle
v_1,\ldots,v_{m_1}\rangle < n$, ${\dim\langle
l_1,\ldots,l_{m_2}\rangle \hm= n}$. In this case we have one
equation on the stabilizer and the graph $\Gamma_v$ should have
two connected components. We obtain hyperplanes given by equations
\eqref{f5}.
\end{proof}

\begin{predl}\label{pr3} The weight cone $\Omega$ is given by
inequalities
\begin{gather}
 x_l\geqslant 0 ,\,\,\,l=1,\ldots,m_1,\,\,\, y_p\geqslant 0,\,\,\, p=1,\ldots,m_2, \label{f6}\\
 (n-k)(\sum_{j=1}^{m_2}
 y_j-\sum_{i \in I}x_i)+k\sum_{i \not \in I}x_i\geqslant0, \,\,\, (n-k)(\sum_{i=1}^{m_1}x_i-\sum_{j \in
J}y_j)+k\sum_{j \not \in J}y_j\geqslant 0 \notag,
\end{gather}
where $1\leqslant k \leqslant n-1$, $I \subset \{1,\ldots,m_1\},$
$J \subset \{1,\ldots,m_2\}$, $|I|=|J|=k$.
\end{predl}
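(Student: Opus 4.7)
The plan is to prove the equality $\Omega=P$, where $P$ denotes the polyhedral cone cut out by the displayed inequalities, by establishing both inclusions.

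For $\Omega\subseteq P$, I would verify each listed inequality on each of the three families of generators. Nonnegativity of coordinates is immediate since every generator has nonnegative entries. For the inequality parameterized by $k$ and $I\subset\{1,\ldots,m_1\}$ with $|I|=k$, direct substitution yields: on $f_{i_1\ldots i_n}$ with $T=\{i_1,\ldots,i_n\}$ (so $y=0$), the LHS equals $n(k-|I\cap T|)\geq 0$, since $|I\cap T|\leq|I|=k$; on $g_{j_1\ldots j_n}$ (so $x=0$), it equals $n(n-k)\geq 0$; on $h_{ij}$, it equals $0$ if $i\in I$ and $n$ otherwise. The $J$-inequalities are handled symmetrically.

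For $P\subseteq\Omega$, my approach is to give a constructive decomposition. Given $a=(x,y)\in P$, we seek nonnegative scalars $\lambda_T,\mu_{T'},\nu_{ij}$ with $a=\sum_T\lambda_T f_T+\sum_{T'}\mu_{T'}g_{T'}+\sum_{i,j}\nu_{ij}h_{ij}$. Setting $\rho_i=\sum_j\nu_{ij}$ and $\sigma_j=\sum_i\nu_{ij}$, the residuals $x-\rho$ and $y-\sigma$ must lie in the respective hypersimplex cones $\cone(f_T)$ and $\cone(g_{T'})$, which admit the well-known facet description $\{u\geq 0:\ nu_i\leq\sum_l u_l\}$. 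Thus the problem reduces to a transportation feasibility question: find $\nu\geq 0$ with row sums $\rho$ and column sums $\sigma$ satisfying $\rho_i\leq x_i$, $\sigma_j\leq y_j$, $n(x_i-\rho_i)\leq\sum_l(x_l-\rho_l)$, and $n(y_j-\sigma_j)\leq\sum_p(y_p-\sigma_p)$.

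I expect the hardest step to be proving feasibility of this transportation LP for every $a\in P$. The $I$-inequality, rewritten as $(n-k)\sum_j y_j+k\sum_i x_i\geq n\sum_{i\in I}x_i$ with $|I|=k$, is precisely the Hall-type condition ensuring that the hypersimplex residual on the $x$-side can be absorbed by a valid column-sum profile $\sigma$; the $J$-inequalities play the symmetric role. A Farkas-type argument combining the two families then produces the desired $(\rho,\sigma,\nu)$ and completes the decomposition. An alternative, more abstract route would apply Farkas' lemma directly to reduce $P\subseteq\Omega$ to showing that the facet functionals displayed in the statement generate the dual cone $\Omega^\vee$.
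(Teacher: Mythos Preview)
Your plan is viable, but it follows a genuinely different route from the paper's proof.

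The paper does not attempt a direct double inclusion. Instead it \emph{reuses the previous proposition}: the stabilizer/graph analysis there in effect classifies all hyperplanes that contain $m-1$ linearly independent generating weights, namely the families \eqref{f2}--\eqref{f5}. Since every facet-supporting hyperplane of $\Omega$ must be of this form, the paper simply walks through the list and discards each hyperplane that has generators on both sides (e.g.\ $h_{i_1j_2}$ and $h_{i_2j_1}$ for \eqref{f5} with $0<|I|<m_1$, $0<|J|<m_2$; or an $f$ and a $g$ for \eqref{f4}). What survives is exactly the system \eqref{f6}. No decomposition, no LP, no Farkas.

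Your approach, by contrast, is self-contained: the verification $\Omega\subseteq P$ on generators is clean (and in fact supplies the ``all generators lie on the nonnegative side'' check that the paper leaves implicit), and the reverse inclusion is reduced to a transportation/hypersimplex feasibility problem. That reduction is correct---the cones $\cone(f_T)$ and $\cone(g_{T'})$ are indeed the hypersimplex cones $\{u\ge 0:\ nu_i\le \sum_l u_l\}$---but the feasibility step you flag as ``hardest'' really is the whole content of $P\subseteq\Omega$, and the sentence ``a Farkas-type argument combining the two families then produces the desired $(\rho,\sigma,\nu)$'' is where all the work hides. You would need to exhibit the common value $R=\sum_i\rho_i=\sum_j\sigma_j$ and check the box constraints $x_i-\tfrac{1}{n}(S_x-R)\le\rho_i\le x_i$ (and symmetrically for $\sigma$) are simultaneously satisfiable; this is doable, and the $|I|=k$ inequalities are exactly what make it go through, but it is several careful paragraphs, not a one-liner.

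In short: the paper's argument is shorter because it piggybacks on the orbit-cone hyperplane classification already established; yours is independent of that classification and more constructive, at the cost of a nontrivial feasibility argument that your proposal acknowledges but does not yet carry out.
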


\begin{proof}
It is sufficient to find hyperplanes \eqref{f2}--\eqref{f5} which
contain facets of the cone $\Omega$. It is clear that hyperplanes
\eqref{f2} and \eqref{f3} do. Since the weights $\{f_{i_1\ldots
i_n}\}$ and $\{g_{j_1\ldots j_n}\}$ lie on different sides of
hyperplane \eqref{f4}, this hyperplane intersects the interior of
the cone $\Omega$.

Consider equation \eqref{f5}. First suppose that ${0<|I|<m_1}$,
${0<|J|<m_2}$. Let $i_1 \in I$, $i_2 \not\in I$, $j_1 \in J$, $j_2
\not\in~J$. The weights $h_{i_1j_2}$ and $h_{i_2j_1}$ lie on
different sides from the hyperplane. Now let $|J| = m_2$. If
$|I|>k$, then there exist numbers $i_1, \ldots, i_{k+1} \in I$,
$i_{k+2},\ldots, i_n \not\in I$. Weights $f_{i_1\ldots i_n}$ and
$g_{j_1\ldots j_n}$ lie on different sides from the hyperplane.
For $|I|=k$ we obtain  inequalities \eqref{f6}. The cases $|I| =
0,m_1$ and $|J| = 0$ are analyzed similarly.
\end{proof}

It follows from the proof of Proposition \ref{pr3} that
hyperplanes \eqref{f4} and \eqref{f5} intersect the interior of
the cone $\Omega$. Further, if dimension of the orbit cone
$\omega(v)$ is less than $m-1$, then $\omega(v)$ lies in the
intersection of some $(m-1)$-dimensional orbit cones. Thus two
points are GIT-equivalent if and only if they lie in the same
$m$-dimensional and $(m-1)$-dimensional orbit cones.

\pagebreak
\begin{theorem} \label{t3} For the diagonal action of the group $SL(V)$ on the variety
$\mathbb{P}(V)^{m_1}\times \mathbb{P}(V^*)^{m_2}$, where $m_1 \,\,
\mbox{or}\,\,\, m_2$ does not exceed  $n=\dim V$, the GIT-fan is
obtained by cutting of the cone $\Omega$ given by inequalities
\eqref{f6} by hyperplanes
$$x_1+\ldots+x_{m_1}=y_1+\ldots+y_{m_2}, \leqno (\ref{f4})$$
$$(n-k)\sum_{i \in I}x_i-k\sum_{i \not \in I}x_i=(n-k)\sum_{j \in J}y_j-k\sum_{j \not \in
J}y_j, \leqno (\ref{f5})$$ where $1\leqslant k \leqslant n-1$, $I
\subset \{1,\ldots,m_1\},$ $J \subset \{1,\ldots,m_2\},$
$k\leqslant |I| \leqslant m_1-n+k$ or ${k\leqslant |J| \leqslant
m_2-n+k}$.
\end{theorem}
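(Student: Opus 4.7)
The plan is to follow the strategy of Theorem \ref{t2}. By the preceding proposition, every $(m-1)$-dimensional orbit cone lies in one of the hyperplanes \eqref{f2}--\eqref{f5}, and by the remark after Proposition \ref{pr3} GIT-equivalence is determined by containment in the $m$- and $(m-1)$-dimensional orbit cones. Since \eqref{f2} and \eqref{f3} already define facets of $\Omega$, the remaining task is to exhibit, for every hyperplane $H$ of type \eqref{f4} or \eqref{f5}, a point $v\in\mathbb{V}$ whose orbit cone $\omega(v)$ is $(m-1)$-dimensional and coincides with $\Omega\cap H$.

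For $H$ of type \eqref{f4}, I would take all $v_i$ equal to a single nonzero vector $v_0\in V$ and all $l_j$ equal to a single functional $l_0\in V^*$ with $l_0(v_0)\ne 0$. Every determinantal generator vanishes on $v$ because the $v_i$ (respectively the $l_j$) all lie on a line, while $l_j(v_i)=l_0(v_0)\ne 0$ for every pair, so $\omega(v)$ is generated by the full set $\{h_{ij}\}$; a direct check shows that these rays span $\Omega\cap H$ exactly.

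For $H$ of type \eqref{f5} with parameters $(k,I,J)$, I would fix a decomposition $V=V_1\oplus V_2$ with $\dim V_1=k$ and put $W_1=V_2^{\perp}\subset V^*$, $W_2=V_1^{\perp}\subset V^*$, so that $V^*=W_1\oplus W_2$ with $\dim W_1=k$. Place each $v_i$ in general position inside $V_1$ if $i\in I$ and inside $V_2$ otherwise, and distribute the $l_j$ analogously between $W_1,W_2$ using $J$. Then $l_j(v_i)\ne 0$ iff $i\in I\Leftrightarrow j\in J$, while $\det(v_{i_1},\ldots,v_{i_n})\ne 0$ iff exactly $k$ of the indices lie in $I$ (and similarly for the $l$-determinants). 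A direct computation shows that every surviving weight lies on \eqref{f5}, and the index range $k\le|I|\le m_1-n+k$ (respectively $k\le|J|\le m_2-n+k$) is precisely what is required for at least one family of determinantal generators to survive; the hypothesis $m_1\le n$ or $m_2\le n$ is what selects the parameter ranges appearing in the theorem.

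The main obstacle is the final step: verifying $\omega(v)=\Omega\cap H$ rather than mere inclusion. This reduces to a combinatorial identification of the extremal rays of the face $\Omega\cap H$, read off from the facet presentation of $\Omega$ in Proposition \ref{pr3}, with the list of surviving weights $h_{ij}$, $f_{i_1\ldots i_n}$, $g_{j_1\ldots j_n}$. The boundary subcases $|I|\in\{0,m_1\}$ and $|J|\in\{0,m_2\}$ must be treated separately, in the same spirit as at the end of the proof of Proposition \ref{pr3}; this enumeration is the step that requires the most care.
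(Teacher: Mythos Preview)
Your proposal follows the same overall strategy as the paper: reduce to exhibiting, for every hyperplane $H$ of type \eqref{f4} or \eqref{f5}, a point $v$ with $\omega(v)=\Omega\cap H$. Your choice for \eqref{f4} is exactly the paper's. For \eqref{f5} you place the $v_i$ and $l_j$ in \emph{general position} inside $V_1,V_2,W_1,W_2$; the paper instead takes a deliberately degenerate point (the $v_i$ are basis vectors $e_1,\ldots,e_n$ with many repetitions, and every $l_j$ equals one of the two fixed sums $e^1+\cdots+e^k$ or $e^{k+1}+\cdots+e^n$). Your point has strictly more nonvanishing generators than the paper's, so the inclusion $\omega(v)\subseteq\Omega\cap H$ is the same easy check, and the reverse inclusion can only become easier with your choice.

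The substantive difference is in how the hard inclusion $\Omega\cap H\subseteq\omega(v)$ is handled. You propose to enumerate the extremal rays of $\Omega\cap H$ from the facet description \eqref{f6} and match them against surviving weights, flagging this as the step ``requiring the most care'' and anticipating separate boundary subcases. The paper bypasses ray enumeration entirely: it takes an arbitrary $A\in\Omega\cap H$ and writes down, in closed form, an explicit decomposition of $A$ as a combination of a small list of weights $h_{ij}$ together with a single $f_{1\ldots k\,|I|+1\ldots |I|+n-k}$, all lying in $\omega(v)$; the coefficients are then seen to be nonnegative directly from the inequalities \eqref{f6}. This direct decomposition is short, uniform, and avoids the case analysis your outline would require; it is the one genuinely nontrivial computation in the proof, and it is worth knowing that it can be done in one line rather than through a ray-by-ray enumeration.
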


\begin{proof}
Let us find all $(m-1)$-dimensional orbit cones. It follows from
Proposition \ref{pr3}  that we should only prove that the
intersection $\Pi$ of any hyperplane of type \eqref{f4} or
\eqref{f5} with the cone $\Omega$ is the orbit cone for some point
$v$.

For the case of hyperplane \eqref{f4} let
$v\hm=(e_1,\ldots,e_1,e^1,\ldots,e^1)$. The orbit cone associated
to $v$ lies in this hyperplane and its dimension equals $m-1$.
Note that the inequalities
\begin{gather*} (n-k)(\sum_{j=1}^{m_2}
 y_j-\sum_{i \in I}x_i)+k\sum_{i \not \in I}x_i\geqslant0\,\,\, \mbox{и} \\  (n-k)(\sum_{i=1}^{m_1}x_i-\sum_{j \in
J}y_j)+k\sum_{j \not \in J}y_j\geqslant 0,  \end{gather*} for the
points of hyperplane \eqref{f4} become
$$\sum_{i \not \in I}x_i\geqslant0 \,\,\, \mbox{и} \,\,\, \sum_{j
\not \in J}y_j\geqslant0.$$ Hence the cone $\Pi$ is the
intersection of hyperplane \eqref{f4} with the positive ortant.
Finally, the weights which generate the orbit cone $\omega(v)$ and
edges of the cone $\Pi$ lie on the same rays.

Denote by $H$ the hyperplane \eqref{f5}. Without loss of
generality it can be assumed that equation~\eqref{f5} is of the
form
\begin{equation*}(n-k)\sum_{i=1}^{|I|}
x_i-k\sum_{i=|I|+1}^{m_1}x_i=(n-k)\sum_{j=1}^{|J|}
y_j-k\sum_{j=|J|+1}^{m_2}y_j, \end{equation*} where $k\leqslant
|I| \leqslant m_1-n+k$.

The orbit cone $\omega(v)$ associated to the point
\begin{multline*}v=(e_1,\ldots,e_k,\underbrace{e_k,\ldots,e_k}_{|I|-k},e_{k+1},\ldots,e_n,\underbrace{e_n,\ldots,e_n}_{m_1+k-|I|-n},
\\ \underbrace{e^1+\ldots+e^k,\ldots,
e^1+\ldots+e^k}_{|J|},\underbrace{e^{k+1}+\ldots+e^n,\ldots,
e^{k+1}+\ldots+e^n}_{m_2-|J|})\end{multline*} is
$(m-1)$-dimensional and lies in hyperplane $H$. It is easy to
check that this cone lies in $\Pi$. It remains to prove the
converse implication.

Let $A=(x_1,\ldots,x_{m_1},y_1,\ldots,y_{m_2}) \in \Pi$. Then
\begin{multline*}A\hm=(x_1-\sum\limits_{j=2}^{|J|}y_j-\alpha)h_{11}\hm+\sum\limits_{i=2}^{k}(x_i-\alpha)h_{i1}\hm+
\sum\limits_{i=k+1}^{|I|}x_ih_{i1}\hm+(x_{|I|+1}\hm-\sum\limits_{j=|J|+2}^{m_2}y_j\hm-\alpha)h_{|I|+1\,|J|+1}+\\
+\sum\limits_{i=|I|+2}^{|I|+n-k}(x_i\hm-\alpha)h_{i\,|J|+1}\hm+
\sum\limits_{i=|I|+n-k+1}^{m_1}x_ih_{i\,|J|+1}\hm+\sum\limits_{j=2}^{|J|}y_jh_{1j}\hm+\sum\limits_{j=|J|+2}^{m_2}y_jh_{|I|+1\,j}\hm+\alpha
f_{1\ldots k\,|I|+1\ldots |I|+n-k}, \end{multline*} where
$\alpha\hm=\frac1k(\sum\limits_{i=1}^{|I|}x_i\hm-\sum\limits_{j=1}^{|J|}y_j)$.
It follows from inequalities \eqref{f6}, that coefficients of this
decomposition are positive. Therefore $A \in \omega(v)$, and $\Pi$
lies in $\omega(v)$. Hence $\Pi$ coincides with $\omega(v)$. This
completes the proof of Theorem \ref{t3}.
\end{proof}

Now let $m_1<n$ and $m_2<n$. In this case the weight cone is
$(m-1)$-dimensional.

\begin{theorem} \label{t4} For the diagonal action of the group $SL(V)$ on the variety
$\mathbb{P}(V)^{m_1}\times \mathbb{P}(V^*)^{m_2}$, where ${m_1,
m_2<n=\dim V}$, the GIT-fan is obtained by cutting of the cone
\begin{equation*}
\Omega=\{(x_1,\ldots,x_{m_1},y_1,\ldots,y_{m_2})\,|\,
x_1+\ldots+x_{m_1}=y_1+\ldots+y_{m_2}; \, x_i, y_j\geqslant0\}
\end{equation*}  by hyperplanes
\begin{equation}\sum_{i \in I}x_i=\sum_{j \in J}y_j, \label{f7} \end{equation} where $I \subset
\{1,\ldots,m_1\},$ $J \subset \{1,\ldots,m_2\},$ $I \ne
\varnothing, \{1,\ldots,m_1\},$ ${J \ne \varnothing,
\{1,\ldots,m_2\}}$.
\end{theorem}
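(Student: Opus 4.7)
The plan is to adapt the strategy of Theorem \ref{t3} to the situation where $\Omega$ itself is only $(m-1)$-dimensional. Since $m_1,m_2<n$, no $n$-tuple of indices is available to form a determinant $\det(v_{i_1},\ldots,v_{i_n})$ or $\det(l_{j_1},\ldots,l_{j_n})$; therefore the only invariants are the pairings $h_{ij}=l_j(v_i)$, and $\Omega$ is generated by the corresponding weights. Each $h_{ij}$ satisfies $x_1+\ldots+x_{m_1}=y_1+\ldots+y_{m_2}$, so $\Omega$ lies in that hyperplane and is cut out there by $x_i,y_j\geqslant 0$, yielding the stated description. Because $\Omega$ is $(m-1)$-dimensional, its walls, that is the cones that determine the refinement of $\Omega$ into the GIT-fan, are the $(m-2)$-dimensional orbit cones.

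My first step is to locate all $(m-2)$-dimensional orbit cones. Under the hypothesis $m_1,m_2<n$, Case~1 of the first Proposition of this section applies automatically, and the stabilizer $T_{\pi(v)}$ is governed by the bipartite graph $\Gamma_v$ on $\{v_1,\ldots,v_{m_1},l_1,\ldots,l_{m_2}\}$ with edges where $l_j(v_i)\ne 0$. A short calculation with the relations $t_is_j=1$ along edges shows that $\dim T_{\pi(v)}$ equals the number of connected components of $\Gamma_v$ (isolated vertices counted), so in view of $\dim\omega(v)+\dim T_{\pi(v)}=m$, an $(m-2)$-dimensional orbit cone corresponds to a graph with exactly two components. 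Writing the vertex partition as $\{1,\ldots,m_1\}=I\sqcup I'$ and $\{1,\ldots,m_2\}=J\sqcup J'$, every edge $(i,j)$ lies inside a single component, and hence every generator $h_{ij}$ of $\omega(v)$ satisfies $\sum_{k\in I}x_k=\sum_{k\in J}y_k$. Thus $\omega(v)$ lies on a hyperplane of type \eqref{f7}; the cases where one of $I,J$ is empty or total correspond to an isolated component consisting of a single vertex, and yield only facets $x_i=0$ or $y_j=0$ of $\Omega$, so the walls in the interior of $\Omega$ require $I,J$ proper nonempty.

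The second step is to realize every such wall as an actual orbit cone. Given proper nonempty $I,J$, fix a decomposition $V=V_1\oplus V_2$ with $\dim V_k\geqslant 1$ (possible because $n\geqslant 2$), bases $e_1\in V_1$, $e_2\in V_2$, and duals $e^1,e^2$; set $v_i=e_1$ for $i\in I$, $v_i=e_2$ for $i\in I'$, $l_j=e^1$ for $j\in J$, $l_j=e^2$ for $j\in J'$. Then $l_j(v_i)\ne 0$ precisely when $i$ and $j$ are on the same side, so $\Gamma_v$ has two complete-bipartite components and $\omega(v)=\cone(h_{ij}\mid(i,j)\in (I\times J)\cup(I'\times J'))$. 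On $\Omega$ the single equation $\sum_{i\in I}x_i=\sum_{j\in J}y_j$ is equivalent to the pair of equations $\sum_{i\in I}x_i=\sum_{j\in J}y_j$ and $\sum_{i\in I'}x_i=\sum_{j\in J'}y_j$, so $H\cap\Omega$ splits as the product of two sub-weight-cones indexed by $(I,J)$ and $(I',J')$. Each sub-cone is the full positive cone on the corresponding $h_{ij}$'s by a standard transportation-polytope argument (any nonnegative $(x_I,y_J)$ with matching sums can be written as $\sum M_{ij}h_{ij}$ for a nonnegative matrix $M$ with the prescribed row and column sums). Consequently $\omega(v)=H\cap\Omega$.

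Finally, any orbit cone whose graph has $p\geqslant 3$ components lies in the intersection of the hyperplanes of type \eqref{f7} obtained by merging those components into any two blocks, and hence in intersections of $(m-2)$-dimensional orbit cones already produced. By Proposition~\ref{pr1}, two points of $\Omega$ are GIT-equivalent iff they lie in the same $(m-1)$- and $(m-2)$-dimensional orbit cones, which by the previous steps are exactly the closures of the chambers and walls produced by cutting $\Omega$ with the hyperplanes \eqref{f7}. The most delicate point I expect is the product decomposition in the middle paragraph: it is what guarantees that the constructed $\omega(v)$ fills the whole wall rather than merely sitting strictly inside it.
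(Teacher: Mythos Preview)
Your argument is correct and follows the paper's proof essentially line for line: the same bipartite graph $\Gamma_v$, the same identification of $(m-2)$-dimensional orbit cones with two-component graphs, and the same explicit point $v$ built from $e_1,e_2,e^1,e^2$ to realize each wall. The only difference is that you spell out the transportation-matrix argument showing $\omega(v)=H\cap\Omega$, whereas the paper simply writes down $v$ and declares the proof complete; your addition is a legitimate (and helpful) verification of a step the paper leaves implicit.
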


\begin{proof} In this case the weight cone is generated by the weights $\{h_{ij}\}$. It is clear that the weight cone
is contained  in the cone $\Omega$. On the other hand,  edges of
the cone $\Omega$ are precisely the generators of the weight cone.

We need to find all $(m-2)$-dimensional orbit cones. As above let
us construct the graph $\Gamma_v$ for any vector $v$. The
stabilizer $T_{\pi(v)}$ should be of dimension two. Hence the
graph $\Gamma_v$ has two connected components. In this case the
orbit cone $\omega(v)$ is contained in the intersection of the
weight cone $\Omega$ with hyperplane \eqref{f7}, where $I$ and $J$
are sets of numbers: $i\in I$ and $j\in J$ if $v_i$ and $l_j$ lie
in the first connected component of the graph $\Gamma_v$. Finally
it is necessary to prove that intersection of the cone $\Omega$
with hyperplane  \eqref{f7} is the orbit cone associated to some
vector $v$. For this let the vector $v$ be
$(v_1,\ldots,v_{m_1},l_1,\ldots,l_{m_2})$, where $v_i=e_1$,
$l_j=e^1$, if $i \in I, j \in J$ and $v_i=e_2, l_j=e^2$, if $i
\not\in I, j \not\in J$. This completes the proof of Theorem
\ref{t4}.

\end{proof}

\end{document}